\newtheorem{theorem}{Theorem}[section]
\newtheorem{proposition}[theorem]{Proposition}
\newtheorem{corollary}[theorem]{Corollary}
\theoremstyle{definition}
\newtheorem{definition}[theorem]{Definition}
\newtheorem{remark}[theorem]{Remark}
\numberwithin{equation}{section}
\begin{document}
\title{BLOCK SYSTEMS OF FINITE DIMENSIONAL NON-COSEMISIMPLE HOPF ALGEBRAS}
\author{Z.-P. Fan}
\address{Department of Mathematics, Zhejiang University,
Hangzhou, 310027, P.R. China} \email{fman1988@gmail.com}
\author{D.-M. Lu}
\address{Department of Mathematics, Zhejiang University,
Hangzhou, 310027, P.R. China} \email{dmlu@zju.edu.cn}

\subjclass[2000]{16T05, 16T15}



\keywords{the classification of finite dimensional Hopf algebras, coalgebra}

\begin{abstract}
In this paper, we study finite dimensional non-cosemisimple Hopf algebras through its underlying coalgebra. We decompose such a Hopf algebra as a direct sum of `blocks'. Blocks are closely related to each other by `rules' and form the `block system'. Through the block system, we are able to give a lower bound for $\dim H$, where $H$ is a non-cosemisimple Hopf algebras with no nontrivial skew-primitives, and a series of results about non-cosemisimple Hopf algebras of dimension $12p, 15p, 16p, 20p\ \text{and}\  21p$ with $p$ being a prime number.
\end{abstract}
\maketitle

\section*{Introduction}
Throughout the paper, we fix an algebraically closed field $\Bbbk$ with $\mathrm{char}\ \Bbbk= 0$. Every vector space is finite dimensional over $\Bbbk$ if not specified. For a coalgebra $C$, $\Delta$, $\epsilon$ and $G(C)$ denote comultiplication, counit and the set of group like elements respectively. For a Hopf algebra, $\mathcal{S}$ denotes the antipode.

The question of classifying all Hopf algebras of a given dimension comes from the Kaplansky's ten conjectures. So far there are only a few general results.
The classification splits into several different parts according to the coradical being cosemisimple, pointed or non-pointed non-cosemisimple. Recall that the coradical of a coalgebra is the direct sum of all its simple subcoalgebras. A coalgebra is called `cosemisimple' if it equals its coradical, and it is called `pointed' if every simple subcoalgebra is $1$-dimensional. Hopf algebras which are both pointed and cosemisimple are group algebras. Then in the following discussions, by `pointed' we mean `pointed but not cosemisimple'.

For a finite dimensional Hopf algebra, by \cite{Larson1988Finite}, it is semisimle as an algebra if and only if it is cosemisimple as a coalgebra. There are a few important results on cosemisimple Hopf algebras, but there is no general strategy for their classification so far. For $p,q,r$ being odd primes, it is already known that cosemisimple Hopf algebras with dimension $p,2p,p^2,pq$ are trivial (\cite{zhu1994hopf},\cite{masuoka1995semisimple},\cite{masuoka1996pn},\cite{etingof1998semisimple}), and the classification of cosemisimple Hopf algebras with dimension $p^3,2p^2,pq^2,pqr$ are completed (\cite{masuoka1995semisimple68},\cite{masuoka1995self},\cite{natale1999semisimple},\cite{natale2001semisimple},\cite{natale2004semisimple},\cite{etingof2011weakly}).

At the moment, the most general method for the classification of pointed Hopf algebras is the lifting method developed by Andruskiewitsch and Schneider. First they decompose the associated graded Hopf algebra $gr(A)$ of a pointed Hopf algebra $A$ into a smash biproduct of a braided Hopf algebra $R$ and the group algebra $\Bbbk G(A)$, then study the structure of $R$ as a Nichols algebra over $\Bbbk G(A)$, finally lift relations in $R$ to get ones in $A$ (\cite{Andruskiewitsch2002Pointed},\cite{Andruskiewitsch1998Lifting},\cite{andruskiewitsch2000finite},\cite{Andruskiewitsch2010On}).

For $p,q$ being odd primes and $p<q\le 4p+11$, there is no non-cosemisimple non-pointed Hopf algebra with dimension $p,2p,p^2,pq$ (\cite{zhu1994hopf},\cite{ng2005hopf},\cite{Ng2001Non},\cite{ng2008hopf}).
For non-cosemisimple non-pointed Hopf algebras of other dimensions, there are several classification methods based on different subjects. The dimension of spaces related to the coradical (\cite{andruskiewitsch2001counting},\cite{beattie2004hopf},\cite{fukuda2008structure}), the order of the antipode (\cite{Ng2001Non},\cite{ng2004hopf},\cite{ng2005hopf},\cite{ng2008hopf},\cite{hilgemann2009hopf}), the Hopf subalgebras and the quotient Hopf algebras (\cite{natale2002hopf}) and the braided Hopf algebras in smash biproducts (\cite{cheng2011hopf}) all play important roles in the classification of non-cosemisimple non-pointed Hopf algebras.

The idea of block system is inspired by the classification technique used in \cite{andruskiewitsch2001counting}, \cite{beattie2004hopf} and \cite{fukuda2008structure}. For a Hopf algebra, we build the block system based on its coradical filtration. Block systems become interesting when the Hopf algebras are non-cosemisimple and non-pointed. In this paper, we focus on non-cosemisimple Hopf algebras with no nontrivial skew primitives, which form a subclass of non-cosemisimple non-pointed Hopf algebras. Corollary \ref{cor1}, \ref{cor2}, \ref{cor3}, \ref{cor4} and Proposition \ref{prop3} are the `rules' we know so far, which lead to our main results, Theorem \ref{thm1} and Theorem \ref{thm2}. Theorem \ref{thm1} shows a lower bound for the dimensions of non-cosemisimple Hopf algebras with no nontrivial skew primitives. By Radford's formula for $\mathcal{S}^4$, orders of group like elements are crucial in classification methods working with the antipode. So Theorem \ref{thm2} contributes to the classification of Hopf algebras of dimensions involved in it.

The block system shows a way to understand a non-cosemisimple Hopf algebra through its coalgebra structure by decomposing it into blocks which are highly related by the `rules'. We believe that by further study, more `rules' will be discovered, so more clearly we understand the Hopf algebra structure.

\section{Block System}

We first recall some useful results on dimensions of Hopf algebras related to the coradical filtration (\cite{{andruskiewitsch2001counting}, {beattie2004hopf}, {beattie2013techniques}, {fukuda2008structure}}).

For a coalgebra $C$, we denote by $\{C_n\}_{n\in \mathbb{N}}$ its coradical filtration.

For $x\in C$ and $g,h\in G(C)$, if $\Delta (x)= g\otimes x + x\otimes h$ and $x\notin \Bbbk (g-h)$, then $x$ is called \textit{a nontrivial skew primitives} of $C$.

By \cite[Theorem 5.4.2]{montgomery1993hopf}, there exists a coalgebra projection $\pi: C\to C_0$ with kernel $I$. Set $\rho_L = (\pi \otimes id)\Delta$ and $\rho_R = (id \otimes\pi)\Delta$. Then $C$ becomes a $C_0$-bicomodule with the structure maps $\rho_L$ and $\rho_R$.

Let $P_n$ be the sequence of subspaces defined recursively by

\begin{align*}
P_0 &=0, \\
P_1 &=\{c\in C \ | \  \Delta(c)=\rho_L(c)+\rho_R(c)\}= \Delta^{-1}(C_0\otimes I + I \otimes C_0), \\
P_n &=\{c\in C\ | \ \Delta(c)-\rho_L(c)-\rho_R(c)\in \sum_{1\le i \le n-1} P_i \otimes P_{n-i}\}, \ \ n\ge 2.
\end{align*}
By \cite[Lemma 1.1]{andruskiewitsch2001counting}, $P_n= C_n \cap I$.

For a positive integer $d$, let $D$ be a simple coalgebra of dimension $d^2$. There is \textit{a standard basis} $\{e_{i,j}\}_{i,j\in\{1,\cdots,d\}}$ of $D$, such that 
$$
\Delta(e_{i,j})=\sum_{k=1}^d e_{i,k}\otimes e_{k,j} \ \text{and}\  \varepsilon(e_{i,j})=\delta_{i,j}. $$
The $j$th column $\{e_{i,j}\}_{i\in \{1,\cdots,d\}}$ is a basis of the simple left $D$-comodule $\bigoplus_{i\in \{1,\cdots,d\}} \Bbbk e_{i,j} $ with comultiplication as comodule map. Every simple left $D$-comodule is isomorphic to $\bigoplus_{i\in \{1,\cdots,d\}} \Bbbk e_{i,j} $. Similarly, the $i$th row $\{e_{i,j}\}_{j\in \{1,\cdots,d\}}$ forms a basis of the unique simple right $D$-comodule up to isomorphisms.

Let $\hat{C}$ be the set of isomorphism types of simple left $C$-comodules. There is a one to one correspondence between $\hat{C}$ and the set of simple subcoalgebras of $C$. Every simple left $C$-comodule is isomorphic to the unique (up to isomorphisms) simple left comodule of a simple subcoalgbra of $C$. For $\tau \in \hat{C}$, let $D_\tau$ be the corresponding simple subcoalgebra of $C$ with $\dim D_\tau= {d_{\tau}}\!\!^2$. Then we have
$$
C_0= \bigoplus_{\tau\in \hat{C}} D_{\tau}.
$$
For convenience, we denote by $D_g$ the simple subcoalgebra generated by $g\in G(C)$, and the index of $D_g$ in $\hat{C}$ is also $g$.

Set $C_{0,d} = \bigoplus_{\tau\in \hat{C},d_\tau = d} D_\tau$ with $d\ge 1$. Easy to see that $\Bbbk G(C)= C_{0,1}$.

See that the dual of a simple left $C$-comodule is also a simple right $C$-comodule. For $\tau \in \hat{C}$, we denote by $V_\tau$ (respectively, $V_\tau^*$) the simple left (respectively, right) $C$-comodule corresponding to the simple subcoalgebra $D_\tau$ (as a representative of the unique isomorphism type). Let $\{e^\tau_{i,j}\}_{i,j\in\{1,\cdots,d_\tau\}}$ be a standard basis of $D_\tau$, then
$$
V_\tau \simeq \bigoplus_{i\in \{1,\cdots,d\tau\}} \Bbbk e^\tau_{i,j}\ \text{and}\ V_\tau^* \simeq \bigoplus_{j\in \{1,\cdots,d\tau\}} \Bbbk e^\tau_{i,j}.
$$

Since $C_0$ is cosemisimple and $\Bbbk$ is a perfect filed, any $C_0$-bicomodule can be decomposed as a direct sum of simple $C_0$-subbicomodules. It is well known that every simple $C_0$-bicomodule is of the form $V_\tau \otimes V_\mu^*$ and of dimension $d_\tau d_\mu$, for some $\tau,\mu \in \hat{C}$ with $\dim V_\tau=d_\tau$ and $dim V_\mu^* =d_\mu$. 

Through $\rho_L$ and $\rho_R$, $C_n$ and $P_n$ are $C_0$-subbimodules of $C$. As in \cite{{beattie2004hopf},{beattie2013techniques},{fukuda2008structure}}, for $\tau,\mu \in \hat{C}$, $P^{\tau,\mu}_n$ denotes the direct sum of simple $C_0$-subbicomodules of the form $V_\tau \otimes V_\mu^*$ in $P_n$. $P^{\tau,\mu}_n$ is called \textit{non-degenerate} if $P^{\tau,\mu}_n\nsubseteq P_{n-1}$. See that $P^{\tau,\mu}_n$ is a $C_0$-bicomodule and $P^{\tau,\mu}_{n-1}\subseteq P^{\tau,\mu}_n$. Then there exists a $C_0$-bicomodule $Q^{\tau,\mu}_n$, which is also isomorphic to a direct sum of simple $C_0$-bicomodules of the form $V_\tau \otimes V_\mu^*$, such that
$$
P^{\tau,\mu}_n=P^{\tau,\mu}_{n-1}\oplus Q^{\tau,\mu}_n.
$$


Then we give the definition of a block system.
\begin{definition}
For a coalgebra $C$, we set
$$
B^{d_1,d_2}_n =
\left\{
\begin{array}{ll}
0,&n=0, d_1\ne d_2,\\
C_{0,d},&n=0, d_1=d_2=d\ge 1,\\
\bigoplus_{\tau,\mu \in \hat{C}, d_\tau=d_1, d_\mu=d_2}Q^{\tau,\mu}_n, &n, d_1 , d_2\ge1.
\end{array}\right.
$$
and call it a \textit{block} of $C$.

It is obvious that $C= \bigoplus_{n\ge0,d_1,d_2\ge1} B^{d_1,d_2}_n$, and we call this direct sum \textit{the block system of $C$}.
\end{definition}


Let $H$ be a Hopf algebra. Follow the notations above. See that $P^{\tau,\mu}_n$ is non-degenerate if and only if $Q^{\tau,\mu}_n \ne 0$ for $\tau,\mu \in \hat {H}$ and $n\ge 1$. Hence we can rewrite some results in \cite{beattie2013techniques} and \cite{fukuda2008structure}.

\begin{proposition}\label{prop1}
For $\tau,\mu \in \hat {H}$, we have the following results.
\begin{enumerate}
\item \cite[Lemma 3.2]{fukuda2008structure} If $Q^{\tau,\mu}_n \ne 0$ for some $n>1$, then there exists a set of simple subcoalgebras $\{D_1, \cdots,D_{n-1}\}$ such that $Q^{\tau,D_i}_i \ne 0$ and $Q^{D_i,\mu}_i \ne 0$ for all $1\le i\le n-1$.
\item \cite[Lemma 3.5]{fukuda2008structure} For $g\in G(H)$, $n\ge1$, $\dim Q^{\tau,\mu}_n = \dim Q^{\mathcal{S}\tau,\mathcal{S}\mu}_n =\dim Q^{g\tau,g\mu}_n =\dim Q^{\tau g,\mu g}_n $, where the superscript $\mathcal{S}\tau$ means that the simple subcoalgebra is $\mathcal{S}(D_\tau)$ and $g\tau$ (resp. $\tau g$) means the simple subcoalgebra $gD_\tau$ (resp. $D_\tau g$).
\item \cite[Lemma 3.8]{fukuda2008structure} Assume that $Q^{\tau,\mu}_n \ne 0$ for some $n\ge1$. If $d_\tau \ne d_\mu$ or $\dim Q^{\tau,\mu}_n\ne d_\tau^2$, then there exists a simple subcoalgebra $E$ such that $Q^{\tau,E}_{n'} \ne 0$ for some $n' \ge n+1$.
\end{enumerate}
\end{proposition}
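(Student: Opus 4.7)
\medskip
\noindent\textbf{Proof proposal.} All three statements are translations of lemmas of Fukuda into the block-notation just introduced, so the plan is to verify them by tracking how $Q^{\tau,\mu}_n$ sits inside the filtration and inside the $H_0$-bicomodule decomposition.

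For (1) I would start from a nonzero $c\in Q^{\tau,\mu}_n$ and invoke the recursive description
\[
\Delta(c)-\rho_L(c)-\rho_R(c)\in\sum_{1\le i\le n-1}P_i\otimes P_{n-i}.
\]
Refining the right hand side by the decomposition of $P_i$ and $P_{n-i}$ into $H_0$-subbicomodules of the form $V_\sigma\otimes V_\nu^{*}$ and using the fact that the composition of the two cotensor/factorisations must land in $V_\tau\otimes V_\mu^{*}$, the only surviving summands at level $i$ have left type $\tau$ and right type some $D_i\in\hat H$, and at level $n-i$ have left type $D_i$ and right type $\mu$. Picking any $D_i$ that occurs nontrivially in the $i$-th piece yields an intermediate simple subcoalgebra with $Q^{\tau,D_i}_i\ne0$ and $Q^{D_i,\mu}_i\ne0$. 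Since this argument works for every $1\le i\le n-1$, one obtains the required chain.

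For (2) I would use the standard identities $\mathcal{S}(H_n)\subseteq H_n$ and $\mathcal{S}(H_0)=H_0$, so that $\mathcal{S}$ restricts to isomorphisms $\mathcal{S}:P_n\to P_n$ and $\mathcal{S}:P_{n-1}\to P_{n-1}$, inducing an isomorphism on the quotients. Because $\mathcal{S}$ is a coalgebra anti-morphism, it swaps left and right comodule structures and converts a simple subbicomodule of type $V_\tau\otimes V_\mu^{*}$ into one of type $V_{\mathcal{S}\mu}\otimes V_{\mathcal{S}\tau}^{*}$; reading off dimensions gives $\dim Q^{\tau,\mu}_n=\dim Q^{\mathcal{S}\tau,\mathcal{S}\mu}_n$. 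For the group-like equalities I would check that left multiplication $\ell_g:H\to H$ is a bijective filtered linear map sending $P_n$ to $P_n$, and that the induced map on $H_0$-bicomodules shifts the left comodule type by $g$ (because $\Delta(gh)=gh_{(1)}\otimes gh_{(2)}$ and $\pi(gh_{(1)})=g\pi(h_{(1)})$), hence sends a $V_\tau\otimes V_\mu^{*}$ summand isomorphically to a $V_{g\tau}\otimes V_\mu^{*}$ summand. The right multiplication argument is analogous.

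For (3), the subtle one, I would proceed by contradiction: assume that $Q^{\tau,\mu}_n\ne0$ but $Q^{\tau,E}_{n'}=0$ for every simple $E$ and every $n'\ge n+1$. This forces the left $H_0$-subbicomodule of $H$ of left-type $\tau$ to stabilise at level $n$, and counting the multiplicities of the simple left $D_\tau$-comodule in the graded pieces up to level $n$ then yields a rigid dimensional identity involving the $Q^{\tau,\mu}_n$ summands. Using part (2) to symmetrise the contribution over the left orbit, one finds that the only way such a stabilisation is compatible with the Hopf algebra structure (i.e.\ with the multiplication carrying $D_\tau\cdot P_n$ back into the filtration) is if $d_\tau=d_\mu$ and $\dim Q^{\tau,\mu}_n=d_\tau^{2}$. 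The main obstacle, as in Fukuda's original argument, is bookkeeping: one must keep the left and right bicomodule types separate while using the Hopf algebra axioms to show that a would-be top block of unbalanced type cannot exist; once that contradiction is set up, the conclusion follows.
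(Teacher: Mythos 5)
First, note that the paper does not prove Proposition \ref{prop1} at all: each item is quoted verbatim from \cite[Lemmas 3.2, 3.5, 3.8]{fukuda2008structure}, so your reconstruction has to be measured against Fukuda's arguments rather than anything in this text. Your sketch of (1) is essentially the right outline (iterate $\Delta(c)-\rho_L(c)-\rho_R(c)\in\sum P_i\otimes P_{n-i}$ and match bicomodule types in the middle), though you should say explicitly that you use $c\notin P_{n-1}$ to force \emph{every} level $1\le i\le n-1$ to contribute through the non-degenerate parts $Q_i$ and $Q_{n-i}$ rather than merely through $P_i$ and $P_{n-i}$; without that, the chain could collapse at some $i$.

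There are two genuine problems. In (2), your claim that left multiplication by $g$ ``shifts the left comodule type by $g$'' and sends a $V_\tau\otimes V_\mu^{*}$ summand to one of type $V_{g\tau}\otimes V_\mu^{*}$ is wrong: since $\Delta(gc)=(g\otimes g)\Delta(c)$, \emph{both} structure maps $\rho_L$ and $\rho_R$ are twisted, so $gQ^{\tau,\mu}_n=Q^{g\tau,g\mu}_n$ (this is exactly how the paper uses the fact in the proof of Corollary \ref{cor1}). A Taft algebra already refutes your version: for $\Delta(x)=x\otimes 1+g\otimes x$ one has $x\in Q^{g,1}_1$ but $gx\in Q^{g^2,g}_1$, not $Q^{g^2,1}_1$; and the identity $\dim Q^{\tau,\mu}_n=\dim Q^{g\tau,\mu}_n$ that your argument would yield is false. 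In (3), the sketch is not yet a proof: the sentence ``the only way such a stabilisation is compatible with the Hopf algebra structure \dots is if $d_\tau=d_\mu$ and $\dim Q^{\tau,\mu}_n=d_\tau^2$'' simply restates the conclusion, and the proposed constraint that multiplication carries $D_\tau\cdot P_n$ ``back into the filtration'' is vacuous because $H_0H_n\subseteq H_n$ always holds. The missing mechanism is the co-Frobenius/integral argument: if no left-type-$\tau$ component appears above level $n$, then the top Loewy layer of the injective hull of $V_\tau$ inside $H$ sits in $Q^{\tau,\mu}_n$, and the self-duality of $H$ (equivalently, the one-dimensionality of the space of integrals on $H^*$, the same device the paper exploits in Proposition \ref{prop3}(1)) forces that top layer to be a single simple bicomodule $V_\tau\otimes V_\mu^{*}$ with $d_\mu=d_\tau$, hence of dimension $d_\tau^{2}$. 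Without an argument of this kind, part (3) remains unproved.
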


\begin{proposition}\cite[Proposition 3.2 (i)]{beattie2013techniques}\label{prop2}
If $H$ is a non-cosemisimple Hopf algebra with no nontrivial skew-primitives, then for any $g\in G(H)$, there exist $h\in G(H)$ and simple subcoalgebras $D_\tau$ and $D_\mu$ with $\tau,\mu \in \hat {H}$ and $d_\tau,d_\mu > 1$, such that $d_\tau=d_\mu$, $Q^{\tau,g}_1\ne 0$, $Q^{\tau,\mu}_k\ne0$ for some $k>1$ and $Q^{g,h}_m\ne 0$ for some $m>1$.
\end{proposition}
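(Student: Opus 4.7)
The plan is to establish the three conclusions $Q^{\tau,g}_1\ne 0$, $Q^{\tau,\mu}_k\ne 0$ with $d_\tau=d_\mu$ and $k>1$, and $Q^{g,h}_m\ne 0$ with $m>1$, in that order; the key input is the first, after which the remaining two follow from iterated applications of Proposition \ref{prop1}(3).

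First, since $H$ is non-cosemisimple, the coradical filtration is strict at the first step, so $H_0\subsetneq H_1$ and hence $P_1=H_1\cap I\ne 0$. The no-non-trivial-skew-primitives hypothesis gives $P_1^{g,h}=0$ for all $g,h\in G(H)$, because any non-zero element of $P_1^{g,h}$ is a $(g,h)$-skew primitive lying in $I$ and hence non-trivial. Therefore some $Q^{\tau_0,\mu_0}_1\ne 0$ has $(d_{\tau_0},d_{\mu_0})\ne(1,1)$. If $d_{\mu_0}=1$, Proposition \ref{prop1}(2) (right-translation by $\mu_0^{-1}g\in G(H)$) yields $Q^{\tau_0\mu_0^{-1}g,\,g}_1\ne 0$ with $d_{\tau_0\mu_0^{-1}g}=d_{\tau_0}>1$; if instead $d_{\tau_0}=1$, I would use the swapped antipode identity $\dim Q^{\tau,\mu}_n=\dim Q^{\mathcal{S}\mu,\mathcal{S}\tau}_n$ (which comes from $\mathcal{S}$ being a coalgebra anti-automorphism and complements Proposition \ref{prop1}(2)) to reduce to the previous case. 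The main obstacle I anticipate is the remaining configuration in which every non-zero $Q^{\tau_0,\mu_0}_1$ has both $d_{\tau_0},d_{\mu_0}>1$; resolving this will likely require exploiting the algebra structure of $H$ (multiplying an $x\in P_1^{\tau_0,\mu_0}$ by suitable elements to manufacture a block with a group-like index) or a chain argument via Proposition \ref{prop1}(1) pulling a group-like index down from a higher-level non-degenerate block to level $1$.

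With $Q^{\tau,g}_1\ne 0$ and $d_\tau>1$ in hand, condition (c) follows by iterating Proposition \ref{prop1}(3). Since $d_\tau\ne d_g=1$, it gives $E_1$ with $Q^{\tau,E_1}_{n_1}\ne 0$, $n_1\ge 2$; the iteration continues so long as $d_\tau\ne d_{E_j}$ or $\dim Q^{\tau,E_j}_{n_j}\ne d_\tau^2$, producing strictly increasing $n_j$. Because $H$ is finite dimensional, the chain terminates at some $E_j$ with $d_{E_j}=d_\tau$ and $\dim Q^{\tau,E_j}_{n_j}=d_\tau^2$; set $\mu=E_j$ and $k=n_j\ge 2$.

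For (d), I plan to invoke the mirror form of Proposition \ref{prop1}(3)—derived by conjugating the original with the swapped antipode identity—to iteratively extend the \emph{first} index of $Q^{\tau,g}_1$. This produces $Q^{E'_j,g}_{n'_j}\ne 0$ with strictly increasing $n'_j$, terminating when $d_{E'_j}=d_g=1$ and $\dim Q^{E'_j,g}_{n'_j}=1$, i.e., $E'_j\in G(H)$. A final left-translation by $g(E'_j)^{-1}\in G(H)$ via Proposition \ref{prop1}(2) shifts the first index to $g$, giving $Q^{g,\,g(E'_j)^{-1}g}_{n'_j}\ne 0$; take $h=g(E'_j)^{-1}g\in G(H)$ and $m=n'_j\ge 2$.
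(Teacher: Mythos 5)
The paper does not prove this proposition at all --- it is imported verbatim as \cite[Proposition 3.2 (i)]{beattie2013techniques} --- so there is no in-paper argument to compare against; your proposal has to stand on its own, and it does not quite. The decisive step is the very first one: producing a nonzero $Q^{\tau,g}_1$ with a group-like second index and $d_\tau>1$. Your reductions via Proposition \ref{prop1}(2) and the antipode correctly dispose of the cases where some nonzero $Q^{\tau_0,\mu_0}_1$ already has a group-like index on one side, and your observation that $Q^{g,h}_1=0$ for all $g,h\in G(H)$ is right. But the case you flag as ``the main obstacle'' --- every nonzero $Q^{\tau_0,\mu_0}_1$ having $d_{\tau_0},d_{\mu_0}>1$ --- is exactly where the Hopf-algebra content of the proposition lives, and neither of your suggested escapes works: Proposition \ref{prop1}(1) only produces intermediate simple subcoalgebras with no control on their dimensions, and translations/antipode preserve the set of dimensions of the indices. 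The missing idea is a projectivity/integral (Hopf-module) argument: since $H$ is not cosemisimple, the trivial $H^*$-module $\Bbbk_\varepsilon$ is not projective, hence $\mathrm{Ext}^1_{H^*}(\Bbbk_\varepsilon,V)\ne 0$ for some simple $V$, which translates into $P_1^{1,\mu}\ne 0$ (equivalently $Q^{1,\mu}_1\ne0$) for some $\mu$; the no-skew-primitive hypothesis then forces $d_\mu>1$, and translation by $g$ finishes. Without some such input the configuration you worry about cannot be excluded, so this is a genuine gap, not merely a case left to the reader.

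The remaining two-thirds of your argument are sound. Iterating Proposition \ref{prop1}(3) from $Q^{\tau,g}_1\ne 0$ with $d_\tau\ne d_g$ does terminate (the levels strictly increase and $H$ is finite dimensional) at some $\mu$ with $d_\mu=d_\tau$ and $Q^{\tau,\mu}_k\ne0$, $k\ge 2$; this is the same mechanism the paper itself uses for Corollary \ref{cor3}(3) and Remark \ref{rmk2}. The ``mirror'' of Proposition \ref{prop1}(3) that you use for $Q^{g,h}_m$ is legitimately obtained by conjugating with the antipode (note, as you do, that $\mathcal{S}$ swaps the two indices, $\mathcal{S}(Q^{\tau,\mu}_n)=Q^{\mathcal{S}\mu,\mathcal{S}\tau}_n$, which is how the paper uses it in the proof of Corollary \ref{cor1}, despite the unswapped indices printed in Proposition \ref{prop1}(2)), and the final left translation to move the first index onto $g$ is correct. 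So the proposal would be complete once the level-one group-like block is actually produced.
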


The following results show properties of the blocks of a Hopf algebra, which should be considered as `rules' for block systems.

\begin{corollary}\label{cor1}
For blocks of $H$ and $g\in G(H)$, we have
$$
gB^{d_1,d_2}_n = B^{d_1,d_2}_n g= B^{d_1,d_2}_n, \ \text{and}\
\mathcal{S}(B^{d_1,d_2}_n ) = B^{d_2,d_1}_n,
$$
with $n\ge0$ and $d_1,d_2\ge1$.

\end{corollary}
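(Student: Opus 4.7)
The plan is to translate each identity into how $\phi_g\colon x\mapsto gx$, $\psi_g\colon x\mapsto xg$, and $\mathcal{S}$ act on the isotypic decomposition of the $H_0$-bicomodule $P_n$. I would first check the basic functoriality: $\phi_g$ and $\psi_g$ are coalgebra automorphisms of $H$ (since $\Delta(g)=g\otimes g$), and $\mathcal{S}$ is a coalgebra anti-automorphism. Each preserves $H_0$ as the maximal cosemisimple subcoalgebra, hence the entire coradical filtration $\{H_n\}$, and carries simple subcoalgebras to simple subcoalgebras of the same dimension, so that $d_{g\tau}=d_{\tau g}=d_{\mathcal{S}\tau}=d_\tau$.

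Next, choose the projection $\pi\colon H\to H_0$ (and hence $I=\ker\pi$) to be equivariant under $\phi_g,\psi_g$ for every $g\in G(H)$ and under $\mathcal{S}$, which is possible because $H_0$ is stable under all of these. Then $gI=Ig=\mathcal{S}(I)=I$, so $P_n=H_n\cap I$ is stable under all three maps. A direct Sweedler computation shows that for a simple $H_0$-subbicomodule $W\subset P_n$ of type $V_\tau\otimes V_\mu^*$, one has $\rho_L(gw)=(g\otimes g)\rho_L(w)$ and $\rho_R(gw)=(g\otimes g)\rho_R(w)$, so $gW$ has type $V_{g\tau}\otimes V_{g\mu}^*$; similarly $Wg$ has type $V_{\tau g}\otimes V_{\mu g}^*$; and, since $\mathcal{S}$ reverses the comultiplication, $\mathcal{S}(W)$ has type $V_{\mathcal{S}\mu}\otimes V_{\mathcal{S}\tau}^*$. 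Hence $\phi_g,\psi_g,\mathcal{S}$ permute the isotypic components $P^{\tau,\mu}_n$ via the index maps $(\tau,\mu)\mapsto(g\tau,g\mu)$, $(\tau g,\mu g)$, $(\mathcal{S}\mu,\mathcal{S}\tau)$ respectively, each a dimension-preserving bijection on $\hat H\times\hat H$.

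For the corollary, the $n=0$ case reduces to $gC_{0,d}=C_{0,d}g=\mathcal{S}(C_{0,d})=C_{0,d}$, immediate from $gD_\tau=D_{g\tau}$, $D_\tau g=D_{\tau g}$ and $\mathcal{S}(D_\tau)=D_{\mathcal{S}\tau}$. For $n\ge1$, the three index maps preserve the fiber $\{(\tau,\mu):d_\tau=d_1,d_\mu=d_2\}$ (and in the antipode case send it onto $\{(\tau,\mu):d_\tau=d_2,d_\mu=d_1\}$), so the sum $\bigoplus_{d_\tau=d_1,d_\mu=d_2}P^{\tau,\mu}_n$ is stable under $\phi_g,\psi_g$ and sent by $\mathcal{S}$ onto $\bigoplus_{d_\tau=d_2,d_\mu=d_1}P^{\tau,\mu}_n$. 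The main obstacle is then to pick the complements $Q^{\tau,\mu}_n$ of $P^{\tau,\mu}_{n-1}$ coherently along each orbit of the group generated by $\{\phi_g,\psi_g\}_{g\in G(H)}\cup\{\mathcal{S}\}$ (fix one representative per orbit and transport by the maps); once that is done, summing yields the three asserted identities, since $B^{d_1,d_2}_n$ depends only on the dimension pair $(d_1,d_2)$ and not on the individual $\tau,\mu$.
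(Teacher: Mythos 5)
Your argument is correct and follows essentially the same route as the paper's: both show that left/right translation by $g$ and the antipode preserve the coradical filtration, permute the isotypic components $P^{\tau,\mu}_n$ via dimension-preserving index maps $(\tau,\mu)\mapsto(g\tau,g\mu)$, $(\tau g,\mu g)$, $(\mathcal{S}\mu,\mathcal{S}\tau)$, and then sum over the fiber $\{d_\tau=d_1,\,d_\mu=d_2\}$ (the paper gets equality by the two-sided containment trick with $g^{-1}$ and $\mathcal{S}^{-1}$ rather than by invoking bijectivity of the index maps, and it simply asserts $gQ^{\tau,\mu}_n=Q^{g\tau,g\mu}_n$ where you are more careful). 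Your explicit attention to choosing $\pi$ equivariantly and the complements $Q^{\tau,\mu}_n$ coherently addresses a point the paper glosses over, though note that transporting from one orbit representative does not by itself handle pairs $(\tau,\mu)$ with nontrivial stabilizer, where one must additionally average (the stabilizer is finite and $\mathrm{char}\,\Bbbk=0$) to get an invariant complement.
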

\begin{proof}
The image of a simple subcoalgebra of $H$ under $\mathcal{S}$ or left (right) action of $g\in G(H)$ is still a simple one of the same dimension. Hence the proposition is correct for $n=0$.

$\mathcal{S}$, $\mathcal{S}^{-1}$ and left action of $g\in G(H)$ are bijections and keep the coradical filtration. For $n\ge1$, following the definition of $Q^{\tau,\mu}_n$, we have $gQ^{\tau,\mu}_n = Q^{g\tau,g\mu}_n$ and $\mathcal{S}(Q^{\tau,\mu}_n)=Q^{\mathcal{S}\mu,\mathcal{S}\tau}_n$. By Proposition \ref{prop1}(2), $gQ^{\tau,\mu}_n \subseteq B^{d_1,d_2}_n$ and $\mathcal{S}(Q^{\tau,\mu}_n)\subseteq B^{d_2,d_1}_n$  with $d_\tau=d_1, d_\mu=d_2$, which leads to $gB^{d_1,d_2}_n\subseteq B^{d_1,d_2}_n$ and $\mathcal{S}(B^{d_1,d_2}_n ) \subseteq B^{d_2,d_1}_n$. Similarly $g^{-1}B^{d_1,d_2}_n\subseteq B^{d_1,d_2}$ and $\mathcal{S}^{-1}(B^{d_1,d_2}_n ) \subseteq B^{d_2,d_1}_n$.
Hence $gB^{d_1,d_2}_n = B^{d_1,d_2}_n$ and $\mathcal{S}(B^{d_1,d_2}_n ) = B^{d_2,d_1}_n$.

Similar for $B^{d_1,d_2}_n g= B^{d_1,d_2}_n$.
\end{proof}

\begin{corollary}\label{cor2}
For blocks of $H$, $\dim B^{1,1}_0=|G(H)|$ divides the dimension of every block of $H$.
\end{corollary}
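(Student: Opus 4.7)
The equality $\dim B^{1,1}_0 = |G(H)|$ is immediate from the definition, since $B^{1,1}_0 = C_{0,1} = \Bbbk G(H)$. For the divisibility claim, the plan is to prove the stronger statement that each block $B := B^{d_1,d_2}_n$ is a free left $\Bbbk G(H)$-module, which immediately yields $|G(H)| \mid \dim B$. By Corollary~\ref{cor1}, $B$ carries a left $\Bbbk G(H)$-module structure; and since $\mathrm{char}\,\Bbbk = 0$ and $G(H)$ is finite, $\Bbbk G(H)$ is semisimple, so $B$ is free if and only if $\operatorname{tr}(L_g \mid B) = 0$ for every $g \neq 1$.

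The trace will be computed via the decomposition $B = \bigoplus_{d_\tau = d_1,\, d_\mu = d_2} Q^{\tau,\mu}_n$. Since $L_g$ sends $Q^{\tau,\mu}_n$ onto $Q^{g\tau,g\mu}_n$, only pairs stabilized by $g$ contribute a nonzero diagonal block:
\[
\operatorname{tr}(L_g \mid B) = \sum_{\substack{(\tau,\mu):\\ g\tau=\tau,\, g\mu=\mu}} \operatorname{tr}(L_g \mid Q^{\tau,\mu}_n).
\]
For each stabilized pair, $Q^{\tau,\mu}_n$ is a direct sum of copies of the simple $C_0$-bicomodule $V_\tau \otimes V_\mu^*$, and the $g$-action can be described by twist operators $P_\tau$ on $V_\tau$ (and analogously $P_\mu$ on $V_\mu$) implementing the isomorphisms $V_\tau \otimes \chi_g \simeq V_\tau$, $V_\mu \otimes \chi_g \simeq V_\mu$. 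A character-theoretic argument, based on decomposing $V_\tau$ into eigenspaces of $P_\tau$ and observing that the $\chi_g$-action permutes these eigenspaces in equal-dimensional orbits whose eigenvalues sum to zero, gives $\operatorname{tr}(P_\tau) = 0$ for $g \ne 1$; the trace on $V_\tau \otimes V_\mu^*$ then factors as $\operatorname{tr}(P_\tau)\operatorname{tr}(P_\mu^{-1})$ and vanishes.

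The main obstacle will be making this twist analysis rigorous in the non-pointed, non-cosemisimple setting for the higher-degree blocks ($n \geq 1$): one must choose the coalgebra projection $\pi: H \to H_0$ to be $G(H)$-equivariant so the subspaces $Q^{\tau,\mu}_n$ are $L_g$-stable under the diagonal stabilizer, verify that the $g$-action on $Q^{\tau,\mu}_n$ splits as a tensor product of the twist action on the comodule factor $V_\tau \otimes V_\mu^*$ with an auxiliary action on the multiplicity space (so the trace factorization is legitimate), and handle the corner case where $g$ acts trivially on both $V_\tau$ and $V_\mu$ simultaneously. An alternative route bypassing the full trace calculation is orbit-by-orbit: write each $G(H)$-orbit sum inside $B$ as an induced representation $\mathrm{Ind}_K^{G(H)} Q^{\tau_0,\mu_0}_n$ where $K$ is the diagonal stabilizer, and reduce to the freeness of $Q^{\tau_0,\mu_0}_n$ over $\Bbbk K$, where the same character argument applies to the smaller group $K$.
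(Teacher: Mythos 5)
Your reduction of freeness to the vanishing of $\operatorname{tr}(L_g\mid B)$ for all $g\neq 1$ is a valid criterion over the semisimple algebra $\Bbbk G(H)$, and the orbit argument correctly kills the contribution of the pairs $(\tau,\mu)$ not fixed by $g$. The gap sits exactly where you flag ``the main obstacle'', and it is not a technicality. For a fixed pair, the claim $\operatorname{tr}(P_\tau)=0$ is supported only by an eigenspace--orbit argument imported from group representation theory, where one uses group elements $x$ with $\chi_g(x)$ a primitive root of unity to permute the $P_\tau$-eigenspaces transitively and hence force them to be equidimensional. A simple comodule $V_\tau$ over a general Hopf algebra carries no such group action, so no mechanism in your sketch makes the eigenspaces of $P_\tau$ equidimensional; in particular nothing excludes the degenerate case $P_\tau\in\Bbbk^{\times}I$, i.e.\ $L_g=\mathrm{id}$ on $D_\tau$ for some $g\neq 1$, where the trace on $D_\tau$ is $d_\tau^2\neq 0$. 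For $n\geq 1$ the situation is worse: $L_g$ is not a morphism of $H_0$-bicomodules (it twists both coactions), the complements $Q^{\tau,\mu}_n$ and the projection $\pi$ have not been chosen $G(H)$-equivariantly, and the asserted factorization of $L_g$ on $Q^{\tau,\mu}_n$ as a twist on $V_\tau\otimes V_\mu^*$ tensored with an operator on the multiplicity space is unproved. Filling these holes amounts to reproving, by hand, a special case of the freeness theorem for Hopf modules.

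The paper sidesteps all of this by quoting that theorem: for $n=0$, the block $B^{d,d}_0=H_{0,d}$ is an $(H,\Bbbk G(H))$-Hopf module (comodule via $\Delta$, module via left multiplication), hence free over $\Bbbk G(H)$ by Nichols--Zoeller; for $n\geq 1$, both $H_{n-1}$ and $B^{d_1,d_2}_n\oplus H_{n-1}$ are $(H,\Bbbk G(H))$-Hopf modules, hence both free, and subtracting dimensions gives $|G(H)|\mid\dim B^{d_1,d_2}_n$. If you want to rescue your trace computation, observe that the vanishing $\operatorname{tr}(L_g\mid D_\tau)=0$ you need is itself most easily obtained by applying Nichols--Zoeller to $D_\tau$ as a Hopf module over $\Bbbk\langle g\rangle$ --- at which point one may as well run the paper's argument from the start.
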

\begin{proof}
Let $B^{d_1,d_2}_n$ be a block of $H$. By Corollary \ref{cor1}, every block of $H$ is stable under the left multiplication of $g\in G(H)$.

For $n=0$, $B^{d_1,d_2}_0$ with $d_1=d_2$ is a left $(H,\Bbbk G(H))$-Hopf module with comultiplication of $H$ as comodule map. Then by the Nichols-Zoeller Theorem, $B^{d_1,d_2}_0$ is a free left $\Bbbk G(H)$-module. Then $|G(H)|$ divides the dimension of $B^{d_1,d_2}_0$.

For $n>0$, we have
\begin{align*}
\Delta(B^{d_1,d_2}_n)&\subset H_0\otimes (B^{d_1,d_2}_n)+  (B^{d_1,d_2}_n)\otimes H_0 +\sum_{1\le i \le n-1} P_i \otimes P_{n-i}\\
&\subset H_0\otimes (B^{d_1,d_2}_n)+  (B^{d_1,d_2}_n)\otimes H_0 + H_{n-1}\otimes H_{n-1}.
\end{align*}
See that $H_{n-1}$ and $B^{d_1,d_2}_n\oplus H_{n-1}$ are left $(H,\Bbbk G(H))$-Hopf modules with left multiplication of $\Bbbk G(H)$ as module map and comultiplication of $H$ as comodule map. Then we have $H_{n-1}$ and $B^{d_1,d_2}_n\oplus H_{n-1}$ are free over $\Bbbk G(H)$ and $|G(H)|$ divides the dimensions of $H_{n-1}$ and $B^{d_1,d_2}_n\oplus H_{n-1}$, which leads to the fact that $B^{d_1,d_2}_n$ is free over $\Bbbk G(H)$ and $|G(H)|$ divides the dimension of $B^{d_1,d_2}_n$.
\end{proof}

Every nonzero $Q^{\tau,\mu}_n$ leads to a nonzero block $B^{d_1,d_2}_n$ with $d_1 =d_\tau, d_2 =d_\mu$. Hence the following corollary is a direct result from Proposition \ref{prop1}.

\begin{corollary}\label{cor3}
For blocks of $H$, we have the following results.
\begin{enumerate}
\item If $B^{d_1,d_2}_n \ne 0$ for some $n>1$, then there exists a set of positive integers $\{b_1, \cdots,b_{n-1}\}$ such that $B^{d_1,b_i}_i \ne 0$ and $B^{b_i,d_2}_{n-i} \ne 0$ for all $1\le i\le n-1$.
\item If $B^{d_1,d_2}_n \ne 0$ for some $n\ge 1$, then $B^{d_2,d_1}_n \ne 0$.
\item Assume that $B^{d_1,d_2}_n \ne 0$ for some $n\ge1$. If $d_1 \ne d_2$, then there exists positive integers $d_3$ and $d_4$ such that $B^{d_1,d_3}_{l_1}, B^{d_4,d_2}_{l_2}\ne 0$ for some $l_1,l_2\ge n+1$.
\end{enumerate}
\end{corollary}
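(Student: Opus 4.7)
The plan is to read off each of the three assertions from the corresponding part of Proposition \ref{prop1} (together with Corollary \ref{cor1} for part (2)), using the immediate dictionary that $B^{d_1,d_2}_n$ is nonzero if and only if some $Q^{\tau,\mu}_n$ with $d_\tau=d_1$ and $d_\mu=d_2$ is nonzero. This equivalence is built into the definition $B^{d_1,d_2}_n=\bigoplus_{d_\tau=d_1,\,d_\mu=d_2}Q^{\tau,\mu}_n$, so at every step it suffices to lift a statement about some chosen $Q^{\tau,\mu}_n$ up to the corresponding block, or to pull a statement about a nonzero block back to the existence of a nonzero $Q$-summand.

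For (1), choose $\tau,\mu\in\hat H$ with $d_\tau=d_1$, $d_\mu=d_2$ and $Q^{\tau,\mu}_n\ne 0$; applying Proposition \ref{prop1}(1) produces simple subcoalgebras $D_1,\dots,D_{n-1}$ witnessing the chain condition, and setting $b_i^2=\dim D_i$ promotes the nonzero $Q$'s to nonzero blocks $B^{d_1,b_i}_i$ and $B^{b_i,d_2}_{n-i}$. For (2), the cleanest route is Corollary \ref{cor1}: since $\mathcal S$ is a bijection on $H$ and $\mathcal S(B^{d_1,d_2}_n)=B^{d_2,d_1}_n$, nonvanishing transfers from one side to the other. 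For (3), pick again $\tau,\mu$ with $Q^{\tau,\mu}_n\ne 0$; since $d_\tau=d_1\ne d_2=d_\mu$, the hypothesis of Proposition \ref{prop1}(3) applies and yields a simple subcoalgebra $E$ with $Q^{\tau,E}_{n'}\ne 0$ for some $n'\ge n+1$, so we set $d_3=d_E$ and $l_1=n'$. For the second pair, first invoke (2) to obtain $B^{d_2,d_1}_n\ne 0$, then repeat the same argument on this block to produce $B^{d_2,d'}_{l_2}\ne 0$ with $l_2\ge n+1$, and finally invoke (2) once more to flip it to $B^{d',d_2}_{l_2}\ne 0$; setting $d_4=d'$ completes the selection.

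There is no real obstacle here; the author's own remark that ``the following corollary is a direct result from Proposition \ref{prop1}'' captures the situation. The only care needed is bookkeeping the left/right superscripts when applying $\mathcal S$ in parts (2) and (3) so that the first coordinate stays attached to the ``row'' side and the second to the ``column'' side, and checking that the hypothesis $d_\tau\ne d_\mu$ of Proposition \ref{prop1}(3) is inherited verbatim from $d_1\ne d_2$.
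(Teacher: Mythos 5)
Your proof is correct and follows exactly the route the paper indicates: the paper gives no written argument beyond the remark that every nonzero $Q^{\tau,\mu}_n$ produces a nonzero block $B^{d_\tau,d_\mu}_n$, so the corollary is to be read off from Proposition \ref{prop1}, and your use of Corollary \ref{cor1} for the swap in part (2) and the double-swap trick in part (3) is the natural way to fill that in. The only point worth flagging is that Proposition \ref{prop1}(1) as printed asserts $Q^{D_i,\mu}_i\ne 0$ rather than $Q^{D_i,\mu}_{n-i}\ne 0$, while your derivation (and Corollary \ref{cor3}(1) itself) uses the latter; this is an index typo in the paper's statement of Fukuda's lemma, not a gap in your argument.
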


\begin{remark}\label{rmk2}
Since we always assume that $H$ is of finite dimension, then by Corollary \ref{cor3}(2),(3), $B^{d_1,d_2}_n\ne 0$ for $n\ge1$ with $d_1\ne d_2$ leads to  $B^{d_1,d_1}_{n_1}\ne 0$ and $B^{d_2,d_2}_{n_2}\ne 0$ for some $n_1,n_2\ge n+1$.
\end{remark}

Corollary \ref{cor4} is a direct result from Proposition \ref{prop2} and Corollary \ref{cor3} (2). It shows the necessary blocks for the block system of a non-cosemisimple Hopf algebra with no nontrivial skew-primitives.

\begin{corollary}\label{cor4}
If $H$ is a non-cosemisimple Hopf algebra with no nontrivial skew-primitives, then there exist simple subcoalgebras $D_\tau$ and $D_\mu$ of H with $\tau,\mu \in \hat {H}$ and $d_\tau=d_\mu>1$, such that $B^{d_\tau,1}_1$, $B^{1, d_\tau}_1$, $B^{d_\tau,d_\mu}_k$ and $B^{1,1}_m\ne 0$ for some $k,m>1$.
\end{corollary}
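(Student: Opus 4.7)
The plan is to essentially translate Proposition \ref{prop2} from the $Q$-language into the $B$-language, using the observation explicitly noted just above the statement: every nonzero $Q^{\tau,\mu}_n$ is a summand of the nonzero block $B^{d_\tau,d_\mu}_n$. So nothing new has to be proved; the work is pure bookkeeping plus one application of Corollary \ref{cor3}(2).

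First I would apply Proposition \ref{prop2} with the choice $g = 1 \in G(H)$. This produces $h \in G(H)$ together with $\tau,\mu \in \hat{H}$ satisfying $d_\tau = d_\mu > 1$ and three nonvanishing conditions: $Q^{\tau,1}_1 \ne 0$, $Q^{\tau,\mu}_k \ne 0$ for some $k > 1$, and $Q^{1,h}_m \ne 0$ for some $m > 1$. Since $d_1 = d_h = 1$, these translate directly into $B^{d_\tau,1}_1 \ne 0$, $B^{d_\tau,d_\mu}_k \ne 0$, and $B^{1,1}_m \ne 0$.

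To finish, I would obtain $B^{1,d_\tau}_1 \ne 0$ by applying Corollary \ref{cor3}(2) to the already-established $B^{d_\tau,1}_1 \ne 0$. Collecting the four nonvanishing blocks gives exactly the conclusion of the corollary.

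There is no real obstacle: the entire difficulty has been absorbed into Proposition \ref{prop2}, and the step that remains is simply matching notation $Q^{\tau,\mu}_n \leadsto B^{d_\tau,d_\mu}_n$ and invoking the symmetry rule from Corollary \ref{cor3}(2). The only point to be careful about is that the hypothesis $d_\tau = d_\mu > 1$ from Proposition \ref{prop2} is carried through so that the $D_\tau$, $D_\mu$ appearing in the conclusion really are of dimension greater than one.
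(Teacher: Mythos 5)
Your proposal is correct and follows exactly the route the paper indicates: the paper states that Corollary \ref{cor4} is a direct consequence of Proposition \ref{prop2} and Corollary \ref{cor3}(2), which is precisely your translation of the $Q^{\tau,\mu}_n$ conditions into block language followed by the symmetry rule to obtain $B^{1,d_\tau}_1\ne 0$. The only (harmless) cosmetic difference is that you fix $g=1$, whereas any $g\in G(H)$ would yield the same blocks since $d_g=d_h=1$.
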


\begin{remark}\label{rmk1}
By \cite[Proposition 1.8]{andruskiewitsch2001counting}, a Hopf algebra has no nontrivial skew-primitives if and only if it has no nontrivial pointed Hopf subalgebras. By nontrivial pointed Hopf subalgebras we mean pointed Hopf sbualgebras which are not group algebras. Following \cite[Lemma 2.8]{beattie2013classifying}, if $(|G(H)|, \dim H / |G(H)|)=1$, then $H$ has no nontrivial skew-primitives. A special case is that $\dim H$ is free of squares.
\end{remark}

For a pointed Hopf algebra, all the blocks are of the form $B^{1,1}_n$ with $n\ge0$. For a non-cosemisimple Hopf algebra with no nontrivial nontrivial skew-primitives, $B^{1,1}_n$'s also play an important role.

\begin{proposition}\label{prop3}
For blocks of $H$, let $m= \max \{m'\ |\ B^{1,1}_{m'}\ne 0\}$ and  $l= \min \{m'\ |\ B^{1,1}_{m'}\ne 0\}$, then
\begin{enumerate}
\item $\dim B^{1,1}_m = |G(H)|$;
\item if $l< m$, and $H$ is non-cosemisimple and has no nontrivial skew-primitives, then there exist $d_1,d_2,d_3,d_4>1$ and $l'>l>1$, such that $B^{d_1,1}_{l'}\ne 0$, $B^{1,d_2}_{l'}\ne 0$, $B^{d_1,d_3}_{l'-1}\ne0$ and $B^{d_4,d_2}_{l'-1}\ne0$.
\end{enumerate}
\end{proposition}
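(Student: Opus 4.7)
The plan for part (1) is to first establish the pointwise bound $\dim Q^{g,g'}_m\le 1$ for all $g,g'\in G(H)$. Suppose some $\dim Q^{g,g'}_m\ge 2$; since $d_g=d_{g'}=1$ and $d_g^2=1\ne \dim Q^{g,g'}_m$, Proposition~\ref{prop1}(3) produces a simple subcoalgebra $E$ with $Q^{g,E}_{n'}\ne 0$ for some $n'\ge m+1$. If $d_E=1$ this immediately contradicts the maximality of $m$; if $d_E>1$, repeated application of Corollary~\ref{cor3}(3) yields a strictly increasing sequence of levels $l_0<l_1<\cdots$ with nonzero blocks $B^{1,d^{(k)}}_{l_k}$, and since these levels are bounded (as $H$ is finite-dimensional) the iteration must terminate. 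The only way Corollary~\ref{cor3}(3) ceases to apply is when $d^{(k)}=1$, producing $B^{1,1}_{l_k}\ne 0$ with $l_k>m$, again a contradiction. Together with Corollary~\ref{cor2} this shows $|G(H)|\mid\dim B^{1,1}_m$ and $\dim Q^{g,g'}_m\in\{0,1\}$, so the pairs with $Q^{g,g'}_m\ne 0$ form a disjoint union of left $G(H)$-orbits each of size $|G(H)|$. The sharp equality $\dim B^{1,1}_m=|G(H)|$ then reduces to showing these pairs form a single orbit; I would extract this via a Nichols--Zoeller Hopf-module argument on $B^{1,1}_m$, and this orbit uniqueness is the main obstacle for part (1).

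For part (2), note first that $P^{g,h}_1=Q^{g,h}_1$ is exactly the space of nontrivial $(g,h)$-skew-primitives, so the hypothesis forces $B^{1,1}_1=0$ and hence $l>1$. Apply Corollary~\ref{cor3}(1) to $B^{1,1}_m\ne 0$ at the indices $i=1$ and $i=m-1$: we obtain $b_1,b_{m-1}$ with $B^{1,b_1}_1$, $B^{b_1,1}_{m-1}$, $B^{1,b_{m-1}}_{m-1}$, $B^{b_{m-1},1}_1$ all nonzero, and the no-skew-primitives assumption applied to the level-$1$ blocks forces $b_1,b_{m-1}>1$. Set $l'=m-1$, $d_1=b_1$, $d_2=b_{m-1}$, so that $B^{d_1,1}_{l'},B^{1,d_2}_{l'}\ne 0$. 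Applying Corollary~\ref{cor3}(1) once more to $B^{d_1,1}_{m-1}$ at $i=m-2$ and to $B^{1,d_2}_{m-1}$ at $i=1$ produces $B^{d_1,d_3}_{m-2}$ and $B^{d_4,d_2}_{m-2}$, with $d_3,d_4>1$ by the same skew-primitives argument applied to the accompanying level-$1$ blocks. The antipode symmetry $\mathcal{S}(B^{a,b}_n)=B^{b,a}_n$ from Corollary~\ref{cor1} provides an alternative route to the mirror blocks once one side has been constructed.

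The choice $l'=m-1$ satisfies $l'>l$ whenever $m\ge l+2$. The delicate boundary case is $m=l+1$, where $l'=m-1=l$ fails the strict inequality; here one must invoke Proposition~\ref{prop1}(3) applied to $B^{b_1,1}_l$ (whose hypothesis $d_\tau\ne d_\mu$ holds since $b_1>1$) to obtain $Q^{\tau,E}_{n'}\ne 0$ for some $n'\ge l+1$. If $d_E=1$ we take $l'=n'$; if $d_E>1$, we iterate via Corollary~\ref{cor3}(3) and invoke finite-dimensionality until a block $B^{d,1}_{l'}$ with $l'>l$ and $d>1$ appears, from which the remaining three blocks follow exactly as in the main case. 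This boundary subcase is the part of the argument that requires the most care.
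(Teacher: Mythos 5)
Your part (1) stops exactly where the real difficulty begins, and the tool you nominate cannot close the gap. The bound $\dim Q^{g,g'}_m\le 1$ (via Proposition \ref{prop1}(3) and Remark \ref{rmk2}) is fine, but a Nichols--Zoeller argument on $B^{1,1}_m$ only yields freeness over $\Bbbk G(H)$, i.e.\ that $|G(H)|$ divides $\dim B^{1,1}_m$ --- which is already Corollary \ref{cor2} and is consistent with two or more $G(H)$-orbits. The missing idea, which is the heart of the paper's proof, is an integral argument: one builds a basis $W$ of $H$ adapted to the block decomposition, picks $u\in W\cap B^{1,1}_m$ with $\rho_L(u)+\rho_R(u)=1\otimes u+u\otimes g$, uses the maximality of $m$ to show $u$ never occurs in $\Delta(w)-\rho_L(w)-\rho_R(w)$ for any $w\in W$, and concludes that $u^*$ is a left integral of $H^*$. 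The one-dimensionality of the space of left integrals of $H^*$ then forces every basis element of $B^{1,1}_m$ into the single orbit $\{hu\}_{h\in G(H)}$. Without uniqueness of the integral (or an equivalent global input), multiplicity $\le 1$ plus divisibility does not give the equality $\dim B^{1,1}_m=|G(H)|$.

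Your main case of part (2) ($m\ge l+2$, $l'=m-1$, extracting $d_1,\dots,d_4$ from Corollary \ref{cor3}(1) together with $B^{1,1}_1=0$) is correct and is a genuinely different, purely block-combinatorial route from the paper's. However, the boundary case $m=l+1$ is not closed. There, applying Proposition \ref{prop1}(3) to $Q^{\tau,g}_l\subseteq B^{b_1,1}_l$ yields $Q^{\tau,E}_{n'}\ne 0$ with $n'\ge l+1$; if $d_E>1$ you hold a block $B^{b_1,d_E}_{n'}$ whose two indices may be equal (so Corollary \ref{cor3}(3) does not apply at all), and even when Corollary \ref{cor3}(3) does apply it only replaces the second index by some other positive integer --- it has no mechanism for driving that index down to $1$. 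So your iteration need never produce the required $B^{d,1}_{l'}$ with $l'>l$ and $d>1$. The paper sidesteps this by arguing at the level of basis elements: since $l<m$, any $z\in W\cap B^{1,1}_l$ must appear in the right leg of $\Delta(w)-\rho_L(w)-\rho_R(w)$ for some $w$, and the bicomodule structure places $w$ directly in a block $B^{d,1}_t$ with $t>l$; the subcase $d=1$ is then handled by the estimate $t\ge 2l$ coming from the proof of Fukuda's Lemma 3.2 and the minimality of $l$. You would need this element-level argument, or some substitute, to finish the case $m=l+1$.
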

\begin{proof}
(1) If $H$ is cosemisimple, then $m=0$ and $B^{1,1}_0=\Bbbk G(H)$.

If $H$ is non-cosemisimple, then $m\ge 1$ by Corollary \ref{cor4}.

We built a basis for $H$ at first.

For any $\sigma \in \hat{H}$, let $\{e^\sigma_{i_\sigma,j_\sigma}\}_{i_\sigma,j_\sigma\in\{1\,\cdots,d_\sigma\}}$ be a standard basis of $D_\sigma$.

Recall that
$$
H=\bigoplus_{n'\ge0,d_1,d_2\ge1} B^{d_1,d_2}_{n'}=\bigoplus_{d\ge1}H_{0,d} \ \oplus \bigoplus_{n>0,d_1,d_2\ge1}\Big( \bigoplus_{\tau,\mu \in \hat{H}, d_\tau=d_1, d_\mu=d_2}Q^{\tau,\mu}_n \Big),
$$
where $Q^{\tau,\mu}_n = \bigoplus_k V^{\tau,\mu}_{n,k}$ with $V^{\tau,\mu}_{n,k} \simeq V_\tau\otimes V_\mu^*$ as simple $H_0$-bicomudules for some finite index $k\in \{1,\cdots,k(n,\tau,\mu)\}$. Here $k(n,\tau,\mu)$ denotes the number of simple $H_0$-bicomudules of the form $V_\tau\otimes V_\mu^*$ in $Q^{\tau,\mu}_n$.

We have that $D_\tau$ is the simple subcoalgebra of $H$ corresponding to $V_\tau$ with $\dim D_\tau = {d_\tau}\!\!^2$. Through the isomorphism form the unique simple left $D_\tau$-comodule to $V_\tau$, $\{e^\tau_{i_\tau,1}\}_{i_\tau\in\{1\,\cdots,d_\tau\}}$ leads to a basis of $V_\tau$. Similarly, $\{e^\mu_{1,j_\mu}\}_{j_\mu\in\{1\,\cdots,d_\mu\}}$ leads to a basis of $V_\mu^*$. Then $\{e^\tau_{i_\tau,1}\otimes e^\mu_{1,j_\mu}\}_{i_\tau\in\{1\,\cdots,d_\tau\},{j_\mu\in\{1\,\cdots,d_\mu\}}}$ leads to a basis of $V_\tau\otimes V_\mu^*$, and through the isomorphism from $V_\tau\otimes V_\mu^*$ to $V^{\tau,\mu}_{n,k}$, we have a basis of $V^{\tau,\mu}_{n,k}$, denoted by $W^{\tau,\mu}_{n,k}$.

Set
$$
W= \Big(\bigcup_{\sigma \in \hat{H},i_\sigma,j_\sigma\in\{1\,\cdots,d_\sigma\}} \{e^\sigma_{i_\sigma,j_\sigma}\}\Big) \ \cup \  \Big(\bigcup_{\tau,\mu\in \hat{H},n>0,k\in \{1,\cdots,k(n,\tau,\mu)\}} W^{\tau,\mu}_{n,k}\Big).
$$
Then $W$ is a basis of $H$.

Then we show that there is a basis element in $B^{1,1}_m$ such that its dual is the left integral of $H^*$.

Following notations at the beginning of this section, for any $w'\in W-H_0$, we have
$$
\Delta(w')= \rho_L(w')+\rho_R(w') + \sum_{x,y\in W} k_{x,y} x\otimes y,\ \text{for $k_{x,y}\in \Bbbk$}.
$$
Set 
$$
\begin{array}{l}
L(w)=\left\{
\begin{array}{ll}
\{x\ |\ k_{x,y}\ne 0,y\in W\},& w\in W-H_0,\\
\phi, & w\in W\cap H_0,\ \text{and}\ 
\end{array}\right. \\
R(w)=\left\{
\begin{array}{ll}
\{y\ |\ k_{x,y}\ne 0,x\in W\},& w\in W-H_0,\\
\phi, & w\in W\cap H_0.
\end{array}\right.
\end{array}
$$

There exists $u\in W\cap B^{1,1}_m$, such that $\rho_L(u)+\rho_R(u)= 1\otimes u+ u\otimes g$ for some $g\in G(H)$.

We claim that $u\notin L(w)\cup R(w)$ for all $w\in W$. If $u\in R(w)$, for some $w\in W$, then by the proof of \cite[lemma 3.2]{fukuda2008structure}, $w\in B^{d,1}_r$, for $r>m$. By Corollary \ref{cor3} and Remark \ref{rmk2}, $B^{d,1}_r\ne0$ for $r>m$ leads to $B^{1,1}_{r'}\ne 0$ for $r'>m$, which is a contradiction with the maximality of $m$. Similar for $u\notin L(w)$.

Let $\{w^*\ |\ w\in W\}$ be the dual basis of $W$. Consider the convolution product $w^**u^*$. For any $v\in W$, by $u\notin R(v)$ and the definition of $W$, we have,
$$
w^**u^*(v)=\left\{
\begin{array}{ll}
1& w=1,v=u,\\
0& \text{otherwise}
\end{array}\right..
$$
Hence $w^**u^*=w^*(1)u^*$ for $w\in W$. Then for any $f\in H^*$, $f*u^*=f(1)u^*$, which makes $u^*$ a left integral of $H^*$.

Finally, we show that $\{hu\}_{h\in G(H)}$ is a basis of $B^{1,1}_m$.

See that $W\cap B^{1,1}_m$ is a basis of $B^{1,1}_m$. For $u'\in W\cap B^{1,1}_m$, assume that $\rho_L(u')+\rho_R(u')= g_1\otimes u'+ u'\otimes g_2$, $g_1,g_2\in G(H)$. Then there exists $k\in \Bbbk$ such that $kg_1^{-1}u' \in W$ and
$$
\rho_L(kg_1^{-1}u')+\rho_R(kg_1^{-1}u')= 1\otimes kg_1^{-1}u'+ kg_1^{-1}u'\otimes g_1^{-1}g_2.
$$
Since the space of left integral of $H^*$ is one-dimensional, we must have $kg_1^{-1}u'\in \Bbbk u$ and $g_1^{-1}g_2=g$. Then $u'=k'g_1u$ for some $k'\in\Bbbk$. Since $\rho_L+ \rho_R(\{hu\}_{h\in G(H)})$ is a linearly independent set in $H\otimes H$, $\{hu\}_{h\in G(H)}$ is also linearly independent. Then $\{hu\}_{h\in G(H)}$ is a basis of $B^{1,1}_m$ and $\dim B^{1,1}_m = |G(H)|$.

(2) Following notations in (1). Since $H$ has no nontrivial skew-primitives, we have $B^{1,1}_1 =0$ and $l>1$. By Corollary \ref{cor3} (1), the existence of $d_3,d_4$ with $B^{d_1,d_3}_{l'-1}\ne0$ and $B^{d_4,d_2}_{l'-1}\ne0$ is ensured by the existence of $d_1,d_2$.

For any $z\in W\cap B^{1,1}_l$, there exists $w\in W$ such that $z\in R(w)$ since $l<m$. Assume that $w\in B^{d,1}_t$.

If $d>1$, set $d_1=d_2=d$ and $l'=t$. By Corollary \ref{cor3} (2), $B^{1,d_2}_{l'}\ne0$. Then such $d_1,d_2$ satisfy the requirements.

Otherwise $d=1$. By the proof of \cite[lemma 3.2]{fukuda2008structure} and the minimality of $l$, we have $t\ge 2l$. For $B^{1,1}_t\ne0$, there exist $B^{1,d'}_{t-1},B^{d',1}_1 \ne 0$. Since $H$ has no nontrivial skew-primitives, we have $d'>1$. With $t-1\ge 2l-1 >l$, $B^{1,d'}_{t-1}$ and $B^{d',1}_{t-1}$ are not zero. Set $d_1=d_2=d'$ and $l'=t-1$. Then (2) is proved.
\end{proof}

\section{Applications}

By Corollary \ref{cor3} and Corollary \ref{cor4}, if $H$ is a non-cosemisimple Hopf algebra with no nontrivial skew-primitives, then its block system has the form as
$$
B^{1,1}_0 \oplus B^{d,d}_0\oplus B^{d,1}_1 \oplus B^{1,d}_1\oplus \cdots\oplus B^{1,1}_m \oplus B^{d,d}_{k},
$$
for some $d> 1$, $k= \max \{k'\ |\ B^{d,d}_{k'}\ne 0\}$ and $m= \max \{m'\ |\ B^{1,1}_{m'}\ne 0\}$. By Remark \ref{rmk2}, such a $k$ exits and if $B^{d,d'}_{n}\ne 0$ for some $d'\ge 1$, then $n\le k$. We use a diagram (Figure 1) to show the structure of the block system.

\unitlength=1cm
$$\begin{picture}(8.5,4.5)(0.1,0)
\put(0,0){\framebox(2.5,0.7)[]{$\mathbf{B^{1,1}_0}$}}
\put(3,0){\framebox(2.5,0.7)[]{$\mathbf{B^{d,d}_0}$}}
\put(3,1){\framebox(2.5,0.7)[]{$\mathbf{B^{d,1}_1}$}}
\put(6,1){\framebox(2.5,0.7)[]{$\mathbf{B^{1,d}_1}$}}
\put(3,3){\framebox(2.5,0.7)[]{$\mathbf{B^{1,1}_m}$}}
\put(6,3.5){\framebox(2.5,0.7)[]{$\mathbf{B^{d,d}_k}$}}
\put(4.3,2){\circle*{0.1}}
\put(4.3,2.3){\circle*{0.1}}
\put(4.3,2.6){\circle*{0.1}}
\put(7.3,2.3){\circle*{0.1}}
\put(7.3,2.6){\circle*{0.1}}
\put(7.3,2.9){\circle*{0.1}}
\put(6.8,0.4){\circle*{0.1}}
\put(7.3,0.4){\circle*{0.1}}
\put(7.8,0.4){\circle*{0.1}}
\end{picture}$$
\begin{center}
(Figure 1)
\end{center}
\vskip5mm

These six blocks are necessary for every non-cosemisimple Hopf algeabra with no nontrivial skew-primitives. If the block system of a coalgebra does not contain these six blocks above, then this coalgebra does not admits the structure of a non-cosemisimple Hopf algeabra with no nontrivial skew-primitives.

We find a lower bound for the dimension of a non-cosemisimple Hopf algebra with no nontrivial skew-primitives, which is a generation of \cite[Proposition 3.2]{beattie2013techniques}. For $n_1,n_2\in \mathbb{N}$, we denote by $lcm(n_1,n_2)$ the least common multiple of $n_1$ and $n_2$.

\begin{theorem}\label{thm1}
If $H$ is a non-cosemisimple Hopf algebra with no nontrivial skew-primitives and $|G(H)|=r$,  then
$$
\dim H \ge \min\{(2d+2)r+2lcm(d^2,r)\ |\ d>1\}.
$$
\end{theorem}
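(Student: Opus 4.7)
The plan is to use Corollary~\ref{cor4} and Remark~\ref{rmk2} to identify six mutually distinct nonzero blocks of $H$, bound each from below, and add. Since $H$ is non-cosemisimple with no nontrivial skew-primitives, Corollary~\ref{cor4} produces an integer $d>1$ together with $k,m>1$ such that
\[
B^{1,1}_0,\ B^{d,d}_0,\ B^{d,1}_1,\ B^{1,d}_1,\ B^{1,1}_m,\ B^{d,d}_k
\]
are all nonzero; here $B^{d,d}_0\neq 0$ because it contains the simple subcoalgebra $D_\tau$ produced by Proposition~\ref{prop2}. These six blocks have pairwise different indices $(d_1,d_2,n)$, hence they are distinct direct summands of $H$ and their dimensions add.

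Three of the six bounds are immediate. First, $\dim B^{1,1}_0 = r$, since $B^{1,1}_0 = \Bbbk G(H)$. Second, $\dim B^{1,1}_m = r$ by Proposition~\ref{prop3}(1). Third, each of $B^{d,d}_0$ and $B^{d,d}_k$ is a direct sum of simple $H_0$-bicomodules of the form $V_\tau\otimes V_\mu^*$ with $d_\tau=d_\mu=d$, so its dimension is divisible by $d^2$; combined with divisibility by $r$ from Corollary~\ref{cor2}, each is at least $lcm(d^2,r)$.

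The main obstacle is the bound $\dim B^{d,1}_1\geq dr$ (and symmetrically $\dim B^{1,d}_1\geq dr$), which is strictly stronger than the crude $lcm(d,r)$ that divisibility alone would give. The key observation is that every simple sub-bicomodule of $B^{d,1}_1$ has the form $V_\tau\otimes V_g^*$ with $d_\tau=d$ and $g\in G(H)$, so that $\rho_R(x)=x\otimes g$ for $x\in Q^{\tau,g}_1$; hence $B^{d,1}_1$ carries a right $\Bbbk G(H)$-comodule structure with isotypic decomposition
\[
B^{d,1}_1=\bigoplus_{g\in G(H)}(B^{d,1}_1)^g, \qquad (B^{d,1}_1)^g=\bigoplus_{d_\tau=d}Q^{\tau,g}_1.
\]
Each $Q^{\tau,g}_1$ is a direct sum of copies of the $d$-dimensional bicomodule $V_\tau\otimes V_g^*$, so $\dim(B^{d,1}_1)^g$ is a multiple of $d$. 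By Proposition~\ref{prop1}(2), $\dim Q^{\tau,g}_1=\dim Q^{g'\tau,g'g}_1$ for every $g'\in G(H)$, and reindexing the sum over $\tau$ (using that left multiplication by $g'$ is a bijection on simple subcoalgebras of dimension $d^2$) yields $\dim(B^{d,1}_1)^g=\dim(B^{d,1}_1)^{g'g}$; thus $\dim(B^{d,1}_1)^g$ is independent of $g$. Since $B^{d,1}_1\neq 0$ some isotypic component is nonzero and so has dimension $\geq d$, hence all do, giving $\dim B^{d,1}_1=r\cdot\dim(B^{d,1}_1)^e\geq dr$. An analogous argument using the left coaction and right $G(H)$-translation yields $\dim B^{1,d}_1\geq dr$.

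Summing the six bounds, $\dim H\geq 2r+2\,lcm(d^2,r)+2dr=(2d+2)r+2\,lcm(d^2,r)$. Since the specific $d>1$ is determined by $H$ through Corollary~\ref{cor4}, this implies $\dim H\geq\min\{(2d+2)r+2\,lcm(d^2,r)\mid d>1\}$, as required.
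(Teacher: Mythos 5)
Your proposal is correct and follows essentially the same route as the paper: identify the six necessary blocks from Corollary \ref{cor4}, bound each one ($r$ for the two $B^{1,1}$ blocks, $lcm(d^2,r)$ for the two $B^{d,d}$ blocks, $dr$ for $B^{d,1}_1$ and $B^{1,d}_1$), and sum. Your isotypic-component argument for $\dim B^{d,1}_1\ge dr$ is just a more carefully written version of the paper's appeal to translating a simple sub-bicomodule by the $|G(H)|$ group-likes.
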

\begin{proof}
First we take the lower bound of the possible dimensions of each necessary block. Then add them together (see that these sums are related to $d$). Finally pick the minimal one for all $d>1$ as the lower bound of $\dim H$.

We have $\dim B^{1,1}_0 = |G(H)|=r$. Since $n$ divides the dimension of every block and $d^2$ divides $\dim H_{0,d}$, we have $\dim B^{d,d}_0=\dim H_{0,d}$ is a multiple of $lcm(d^2,r)$. Similarly $\dim B^{d,d}_k$ is a multiple of $lcm(d^2,r)$. By Proposition \ref{prop3} (1), $\dim B^{1,1}_m = |G(H)|=r$.

The dimension of a simple $H_0$-bicomodule in $B^{d,1}_1$ is $d$, and with left multiplications of group like elements in $G(H)$, we have another $|G(h)|-1$ simple $H_0$-bicomodules of different isomorphism types in $B^{d,1}_1$. Hence $\dim B^{d,1}_1$ is a multiple of $d|G(H)|=dr$. By Corollary \ref{cor1}, $\dim B^{1,d}_1 =\dim B^{d,1}_1$ is also a multiple of $dr$.
\end{proof}

For further discussions, we make some dotations at first.
\begin{definition}\label{def2}
For $r,d_1,d_2\ge1$,
\begin{enumerate}
\item set
$$
f(r,d_1,d_2)=\left\{
\begin{array}{ll}
r& d_1=1,d_2=1,\\
2d_2r& d_1=1,d_2\ne1,\\
2d_1r& d_1\ne1,d_2=1,\\
lcm(d_1d_2,n)& d_1\ne1,d_2\ne1;
\end{array}\right.
$$
\item set $L_{r,d_1,d_2}=\Bbbk^{\oplus f(r,d_1,d_2)}$ as a $\Bbbk$-space of dimension $f(r,d_1,d_2)$ and call it \textit{a basic block};
\item for $d>1$, set $L(r,d)=(L_{r,1,1})^{\oplus 2}\oplus (L_{r,d,d})^{\oplus 2}\oplus L_{r,d,1}$ and call it \textit{a minimal form}.
\end{enumerate}
\end{definition}

For $|G(H)|=r$ and $d,d_1,d_2>1$, $n\ge0$, we have
\begin{enumerate}
\item $\dim L(r,d) = (2d+2)r+2lcm(d^2,r)$; 
\item $\dim B^{1,1}_n$ is a multiple of $r=\dim L_{r,1,1}$;
\item $\dim B^{d,1}_n\oplus B^{1,d}_n$ is a multiple of $2dr=\dim L_{r,d,1}=\dim L_{r,1,d}$;
\item $\dim B^{d_1,d_2}_n$ is a multiple of $lcm(d_1d_2,r)=\dim L_{r,d_1,d_2}$.
\end{enumerate}
Then as $\Bbbk$-spaces, $B^{1,1}_n$, $B^{d,1}_n\oplus B^{1,d}_n$ and $B^{d_1,d_2}_n$ are isomorphic to a multiple of $L_{r,1,1}$, $L_{r,d,1}$ and $L_{r,d_1,d_2}$, respectively.

Assume that $H$ is non-cosemisimple and has no nontrivial skew-primitives. Then as $\Bbbk$-spaces, the block system of $H$ is isomorphic to a minimal form $L(|G(H)|,d)$ with more (or no) basic blocks being added. Of course, the adding of basic blocks into the minimal form must coincide with `rules' of block system.

Compare Figure 1 and Figure 2 to see the corresponding relations between necessary blocks and basic blocks.

\unitlength=1cm
$$
\begin{picture}(8.5,4.5)(0.1,0)
\put(0,0){\framebox(2.5,0.7)[]{$\mathbf{L_{r,1,1}}$}}
\put(3,0){\framebox(2.5,0.7)[]{$\mathbf{L_{r,d,d}}$}}
\put(3,1){\framebox(5.5,0.7)[]{$\mathbf{L_{r,d,1}}$}}
\put(3,3){\framebox(2.5,0.7)[]{$\mathbf{L_{r,1,1}}$}}
\put(6,3.5){\framebox(2.5,0.7)[]{$\mathbf{L_{r,d,d}}$}}
\put(4.3,2){\circle*{0.1}}
\put(4.3,2.3){\circle*{0.1}}
\put(4.3,2.6){\circle*{0.1}}
\put(7.3,2.3){\circle*{0.1}}
\put(7.3,2.6){\circle*{0.1}}
\put(7.3,2.9){\circle*{0.1}}
\put(6.8,0.4){\circle*{0.1}}
\put(7.3,0.4){\circle*{0.1}}
\put(7.8,0.4){\circle*{0.1}}
\end{picture}
$$
\begin{center}
(Figure 2)
\end{center}
\vskip5mm

With more condition on $|G(H)|$, we have the next result following Proposition \ref{prop3} and Theorem \ref{thm1}. See that $|G(H)|$ divides $\dim H$.

\begin{corollary}\label{cor5}
If $H$ is a non-cosemisimple Hopf algebra with no nontrivial skew-primitives and $|G(H)|=p$ with $p$ being a prime number, set $\dim H=tp$ with $t\ge 1$, then we have
\begin{enumerate}
\item if $p=2$, then $t\ne 1,2,\cdots,9,11,13,15$;
\item if $p=3$, then $t\ne 1,2,\cdots,13,15,16,19$;
\item If $p>3$, then $t\ne 1,2,\cdots,13,15,16,17,19\ (\text{if $p\ne5$}),20,21\ (\text{if $p\ne7$})$.
\end{enumerate}
\end{corollary}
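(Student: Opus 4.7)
The plan is to apply Theorem \ref{thm1} to eliminate small $t$, and then, for each remaining $t$ in the claimed exclusion list, to show that no admissible block system has dimension $tp$, by controlling how much dimension can be added to a minimal form $L(p,d)$ under the rules of Corollaries \ref{cor1}--\ref{cor4} and Propositions \ref{prop2}--\ref{prop3}.

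First I apply Theorem \ref{thm1} with $r=p$ prime. Since $\mathrm{lcm}(d^2,p)=d^2p$ when $p\nmid d$ and $\mathrm{lcm}(d^2,p)=d^2$ when $p\mid d$, a direct computation gives
$$
(2d+2)p+2\,\mathrm{lcm}(d^2,p)=\begin{cases} 2p(d^2+d+1),& p\nmid d,\\ 2p(d+1)+2d^2,& p\mid d,\end{cases}
$$
and minimising over $d>1$ yields $\dim H\ge 20$ for $p=2$ (attained at $d=2$) and $\dim H\ge 14p$ for $p\ge 3$ (attained at $d=2$). This already eliminates $t\le 9$ for $p=2$ and $t\le 13$ for $p\ge 3$.

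Next, for each excluded $t$ above the threshold, I analyse the gap $\Delta=tp-\dim L(p,d)$ over every admissible choice of $d$ for the minimal form. By Corollary \ref{cor2} every additional basic block contributes a positive multiple of $p$. The cheapest basic blocks are $L_{p,1,1}$ of dimension $p$, the pair $L_{p,d',1}$ of dimension $2d'p$, and $L_{p,d_1,d_2}$ with $d_1,d_2\ge 2$ of dimension $\mathrm{lcm}(d_1d_2,p)$. The decisive input from Proposition \ref{prop3}(2) is that adjoining a new $B^{1,1}_l$-block (the only way to add exactly $p$ dimensions, since $B^{1,1}_0$ and $B^{1,1}_m$ are already in the minimal form with dim $p$ each) with $l<m$ forces supporting blocks $B^{d_1,1}_{l'}\oplus B^{1,d_1}_{l'}$ together with $B^{d_1,d_3}_{l'-1}$ and $B^{d_4,d_2}_{l'-1}$, all $d_i>1$ and $l'>l$, each contributing at least $4p$ or $\mathrm{lcm}(4,p)$ extra dimensions, and none of which can be identified with the $d=2$ minimal form. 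Likewise, Corollary \ref{cor3}(3) forces any new $B^{d_1,d_2}_n$-block with $d_1\ne d_2$ to sit in a cascade terminating at a fresh $B^{d_i,d_i}$-block of dimension at least $\mathrm{lcm}(d_i^2,p)$.

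With these cost estimates in hand, I go through each claimed excluded $t$ case by case and verify that $\Delta$ is strictly smaller than the minimum cascade cost of any admissible extra configuration, for every choice of $d$. The exempted cases $p=5,\ t=19$ and $p=7,\ t=21$ correspond exactly to $\Delta=p^2$, which can be absorbed by a single $L_{p,p,p}$ of dimension $p^2$ arising from adjoining a simple subcoalgebra of dimension $p^2$ to $C_0$; such an enlargement of $H_{0,p}$ sits at level $n=0$ and triggers no cascade. The distinct exclusion lists for $p=2,3$ in (1) and (2) reflect that $\mathrm{lcm}(4,p)$ collapses to $4$ or $12$, making extra multiples of this quantity cheaper and hence allowing more values of $t$. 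The main obstacle will be combinatorial bookkeeping: one must exhaust the choices of $d$ for the minimal form (including $d=p$, where $\mathrm{lcm}(d^2,p)=d^2$ is unusually small), every decomposition of $\Delta$ into admissible basic blocks, and in each case verify that a block produced by a cascade cannot accidentally coincide with one already present---this last verification is where Proposition \ref{prop3}(2) and Corollary \ref{cor3} must be applied with care.
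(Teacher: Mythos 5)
Your strategy is the same as the paper's: use Theorem \ref{thm1} to get the thresholds $10p$ (for $p=2$) and $14p$ (for $p\ge 3$), then argue that for each remaining excluded $t$ the gap $\Delta = tp - \dim L(p,d)$ forces an extra basic block $L_{p,1,1}$, which by Proposition \ref{prop3}(2) triggers a cascade of further blocks whose total cost exceeds $\Delta$. You also correctly identify why $t=19$ survives at $p=5$ and $t=21$ at $p=7$ (namely $\Delta=p^2=\mathrm{lcm}(p^2,p)$ can be absorbed by a block at level $0$ with no cascade). The gap is that the decisive step is never executed: checking, for each excluded $t$ and each admissible minimal form, that $\Delta$ cannot be realized by basic blocks consistent with the rules is only announced (``I go through each case\dots'', ``the main obstacle will be combinatorial bookkeeping''). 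That verification is precisely the content of the corollary beyond the Theorem \ref{thm1} threshold, and the paper carries it out (for $p=3$): since $\Delta\in\{p,2p,5p\}$ forces at least one $L_{p,1,1}$, Proposition \ref{prop3}(2) produces additional blocks $B^{d',1}_{l'}\oplus B^{1,d'}_{l'}$ and $B^{d',d'}_{l'-1}$ distinct from the six necessary ones, so $\dim H\ge 14p+p+2d'p+\mathrm{lcm}({d'}^2,p)>19p$ for both $L(p,2)$ and $L(p,3)$, a contradiction.

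A second point you flag but leave unresolved is whether a cascade block could coincide with one already present in the minimal form (e.g.\ $B^{d',d'}_{l'-1}=B^{d,d}_k$), which would wreck the cost count. This can be settled: $B^{d',1}_{l'}\ne 0$ with $d'>1$ forces, by Corollary \ref{cor3}(3) and Remark \ref{rmk2}, $B^{d',d'}_{n_1}\ne 0$ for some $n_1\ge l'+1$, hence $k\ge l'+1>l'-1$, so the block at level $l'-1$ is genuinely new; the blocks at level $l'\ge 3$ are new because the necessary blocks of shape $B^{d,1}_n$ occur only at $n=1$. Without this observation and the explicit arithmetic for each $t$ and each $p$, your proposal is a correct outline of the paper's argument rather than a proof.
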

\begin{proof}
We only prove (2) here, since the proof for (1),(2) and (3) are similar.

In (2), with $p=3$, the lower bound of $\dim H$ by Theorem \ref{thm1} is $14p$, which corresponds to two different minimal forms $L(p,2)$ and $L(p,3)$. Then $t\ne 1,2,\cdots,13$ are proved.

See that $L(p,4)=42p$. Hence, if $14p<\dim H< 42p$, then as $\Bbbk$-spaces, the block system of $H$ must be isomorphic to $L(p,2)$ or $L(p,3)$ with basic blocks added.

If $t=15,16,$ or $19$, it is not difficult to see that there are at least one basic block $L_{p,1,1}$ of $\dim p$ being added.

By Proposition \ref{prop3} (1), the block $B^{1,1}_m$ with $m= \max \{m'\ |\ B^{1,1}_{m'}\ne 0\}$ has a fixed dimension $p$. This means the new added basic block $L_{p,1,1}$ leads to a nonzero block $B^{1,1}_n$ of $H$ with $n<m$. Then for $l= \min \{m'\ |\ B^{1,1}_{m'}\ne 0\}$, $l<m$. By Proposition \ref{prop3} (2), there exists $B^{1,d}_{l'}\ne 0$, $B^{d,1}_{l'}\ne0$ and $B^{d,d}_{l'-1}\ne0$ for some $l'>l>1$ and $d>1$. Each of the three blocks is not one of the six necessary blocks of $H$. Then there are more basic blocks need to be added into $L(p,2)$ or $L(p,3)$.

For $L(p,2)$, $\dim H \ge \dim L(p,2) + \dim L(p,1,1)+ \dim L(p,2,1)+ \dim L(p,2,2)= 21p$;
for $L(p,3)$, $\dim H \ge \dim L(p,3) + \dim L(p,1,1)+ \dim L(p,3,1)+ \dim L(p,3,3)= 24p$.
Both contradict with $\dim H = tp$. So (2) is proved.
\end{proof}

Corollary \ref{cor5} provides a new angle to look at things we have already known. For example, the fact proved in \cite{DAIJIRO2011HOPF} that there is no non-cosemisimple Hopf algebra of dimension $ 30$ is a direct result from this corollary, Theorem \ref{thm1} and \cite[Corollary 2.2]{ng2005hopf}. If $H$ is a noncosemisimple Hopf algebra with $\dim H= 30$, by Theorem \ref{thm1}, $|G(H)|\ne 15,10,6$, and by Corollary \ref{cor5}, $|G(H)|\ne 5,3,2$. By \cite[Corollary 2.2]{ng2005hopf}, $H$ or $H*$ is not unimodular. Then $|G(H^*)|=30$ or $|G(H)|=30$, which contradicts with $H$ being non-coseimisimple.

\begin{remark}\label{rmk3}
Some of the results in Corollary \ref{cor5} have already been proved by different methods.

Under conditions of Corollary \ref{cor5}, let $q$ be a prime number with $p<q$. By \cite{ng2005hopf}, there is no non-cosemisimple Hopf algebra of dimension $2q$. By \cite{Andruskiewitsch1998Hopf}, a non-cosemisimple Hopf algebra of dimension $p^2$ is a Taft algebra, which is pointed. By \cite{ng2004hopf} and \cite{ng2008hopf}, Hopf algebras of dimension $pq$ with $p,q$ being odd primes and $p<q\le 4p+11$ are cosemisimple. Then all cases in (1)(2)(3) that $t$ does not equal a prime number are covered.

For $p=2$, $t\ne 4$ is proved in \cite{williams1988finite}, which shows that there is no non-cosemisimple non-pointed Hopf algebra of dimension dimension $8$; $t\ne 6$ is proved in \cite{natale2002hopf} that non-cosemisimple Hopf algebras of dimension $12$ always have nontrivial primitives; $t\ne 8$ is proved in \cite{garcia2010hopf}, in which Hopf algebras of dimension $16$ are classified completely; $t\ne 9$ is proved in \cite{hilgemann2009hopf}, which shows that there is no non-cosemisimple non-pointed Hopf algebra of dimension $2q^2$; and $t\ne15$ follows from \cite{DAIJIRO2011HOPF}, which completes the classification of Hopf algebras of dimension $30$.

For $p=3$, $t\ne 4,6$ follows from \cite{natale2002hopf} and \cite{hilgemann2009hopf}, and $t\ne 8,9,10$ is well known by \cite[Proposition 3.2]{beattie2013techniques}.
\end{remark}

Following Remark \ref{rmk1}, we list results in Corollary \ref{cor5} which have not been given by others as far as we know:
\begin{theorem}\label{thm2}
If $H$ is a non-cosemisimple Hopf algebra, then we have the following results.
\begin{enumerate}
\item For a prime number $p$,
    \begin{enumerate}
    \item if $\dim H= 12p$ with $p>3$, then $|G(H)|\ne p$;
    \item if $\dim H= 15p$ with $p>5$, then $|G(H)|\ne p$;
    \item if $\dim H= 16p$ with $p\ge 3$, then $|G(H)|\ne p$;
    \item if $\dim H= 20p$ with $p>5$, then $|G(H)|\ne p$;
    \item if $\dim H= 21p$ with $p=5$ or $p>7$, then $|G(H)|\ne p$.
    \end{enumerate}
\item Assuming that $H$ has no nontrivial skew-primitives,
    \begin{enumerate}
    \item if $\dim H= 36\ \text{or}\ 45$, then $|G(H)|\ne3$;
    \item if $\dim H=75\ \text{or}\ 100$, then  $|G(H)|\ne 5$.
    \end{enumerate}

\end{enumerate}
\end{theorem}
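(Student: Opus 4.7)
The plan is to derive each assertion as an immediate consequence of Corollary \ref{cor5}, using Remark \ref{rmk1} to supply the hypothesis of ``no nontrivial skew-primitives'' wherever it is not directly assumed. In both parts I argue by contradiction, assuming the excluded order of $G(H)$ and then reading off the corresponding value of $t=\dim H/|G(H)|$ from the forbidden list in Corollary \ref{cor5}.

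For part (1), I suppose $|G(H)|=p$, so $\dim H=tp$ with $t\in\{12,15,16,20,21\}$ in sub-cases (a)--(e) respectively. The numerical restrictions on $p$ in each sub-case are designed precisely to guarantee $\gcd(p,t)=1$: $p>3$ forces $\gcd(p,12)=\gcd(p,16)=1$; $p>5$ forces $\gcd(p,15)=\gcd(p,20)=1$; and $p=5$ or $p>7$ forces $\gcd(p,21)=1$. By Remark \ref{rmk1} this yields that $H$ has no nontrivial skew-primitives, so Corollary \ref{cor5} applies. In case (c) with $p=3$ I invoke Corollary \ref{cor5}(2), which forbids $t=16$; in all remaining sub-cases $p>3$ and I invoke Corollary \ref{cor5}(3), verifying that the listed exclusions cover the relevant $t$. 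In particular, $t=12, 15, 16, 20$ are excluded for every $p>3$, while $t=21$ is excluded exactly when $p\ne 7$, which matches the stated hypothesis $p=5$ or $p>7$ in (e).

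For part (2), the ``no nontrivial skew-primitives'' hypothesis is supplied outright, so Corollary \ref{cor5} is applied directly with no need for Remark \ref{rmk1}. Assuming $|G(H)|=3$ in (a) gives $t=12$ when $\dim H=36$ and $t=15$ when $\dim H=45$, both forbidden by Corollary \ref{cor5}(2). Assuming $|G(H)|=5$ in (b) gives $t=15$ when $\dim H=75$ and $t=20$ when $\dim H=100$, both forbidden by Corollary \ref{cor5}(3) because $p=5>3$.

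There is no real obstacle beyond careful bookkeeping. The genuine content sits in Corollary \ref{cor5}, and ultimately in Proposition \ref{prop3} and Theorem \ref{thm1}; Theorem \ref{thm2} simply catalogues consequences. The only point meriting attention is to match each pair $(p,\dim H)$ with the correct sub-item of Corollary \ref{cor5} and to check that the coprimality conditions imposed on $p$ in part (1) are exactly what Remark \ref{rmk1} needs. I would therefore present the proof as a short case analysis, handling each of the five sub-cases of (1) and the four sub-cases of (2) in one or two lines apiece.
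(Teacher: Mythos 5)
Your proof is correct and is exactly the argument the paper intends: the paper offers no separate proof of Theorem \ref{thm2}, stating only that it lists consequences of Corollary \ref{cor5} obtained via Remark \ref{rmk1}, and your case-by-case verification (coprimality of $p$ and $t$ to invoke Remark \ref{rmk1} in part (1), direct application of the hypothesis in part (2), and matching each $t$ against the correct sub-item of Corollary \ref{cor5}) fills in precisely that bookkeeping.
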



\vskip7mm

{\it Acknowledgments.}  This research is supported by the NSFC (Grants No. 11271319).

\vskip7mm

\bibliographystyle{plain}
\bibliography{mybibtex}
\end{document}